\theoremstyle{theorem}
\newtheorem{theorem}{Theorem}
\newtheorem{lemma}[theorem]{Lemma}
\theoremstyle{definition}
\newtheorem{definition}[theorem]{Definition}
\begin{document}

\title{Families of Harris Graphs}
\markright{Families of Harris Graphs}
\author{Francesca Gandini, Shubhra Mishra, and Douglas Shaw}

\maketitle

\begin{abstract}
  A Harris Graph is a tough, Eulerian, non-Hamiltonian graph. Several approaches to creating new Harris graphs from existing ones are explored, including creating families of Harris graphs and combining Harris graphs. Pictures of all Harris Graphs through order 9 and the number of Harris graphs through order 12 are included. We also prove a result about barnacle-free Harris graphs.  
\end{abstract}

\textbf{Dedication}
This paper is dedicated to Harris Spungen. We hope you don't regret raising your hand and claiming you found a new sufficient criterion for a graph to be Hamiltonian.

\section{Introduction}

A graph is \emph{tough} if for every set of vertices S, the number of components of $G-S$ is less than or equal to $|S|$. A graph is \emph{Eulerian} if it is connected and even-degreed.  A graph is \emph{Hamiltonian} if it contains a spanning cycle. 

\begin{definition}[\cite{shaw:2018}]
    A \emph{Harris Graph} is a tough, Eulerian, non-Hamiltonian graph.
\end{definition}

 While tough non-Hamiltonian graphs have been studied \cite{toughness:1991}, the additional condition of being even-degreed makes examples significantly more difficult to find. In this paper, we show methods of creating new Harris graphs from known ones: Simplifying certain paths \ref{simplifying}, combining two Harris graphs \ref{graft}, and expanding Harris graphs into families \ref{fams-of-HG}. In the short term, this will allow us to better understand Harris graphs. In the long term, we hope to completely classify Harris graphs.  Before going on, readers might want to attempt to try to find a Harris graph on their own.

\section{Simplifying barnacles}\label{simplifying}
We define a \emph{k-barnacle} of a Harris graph to be a path of length $k$ between two vertices, $x$ and $y$ such that every vertex on the path (except $x$ and $y$) has degree 2.

\begin{figure}[h!]
\centering
\includegraphics[scale=0.3]{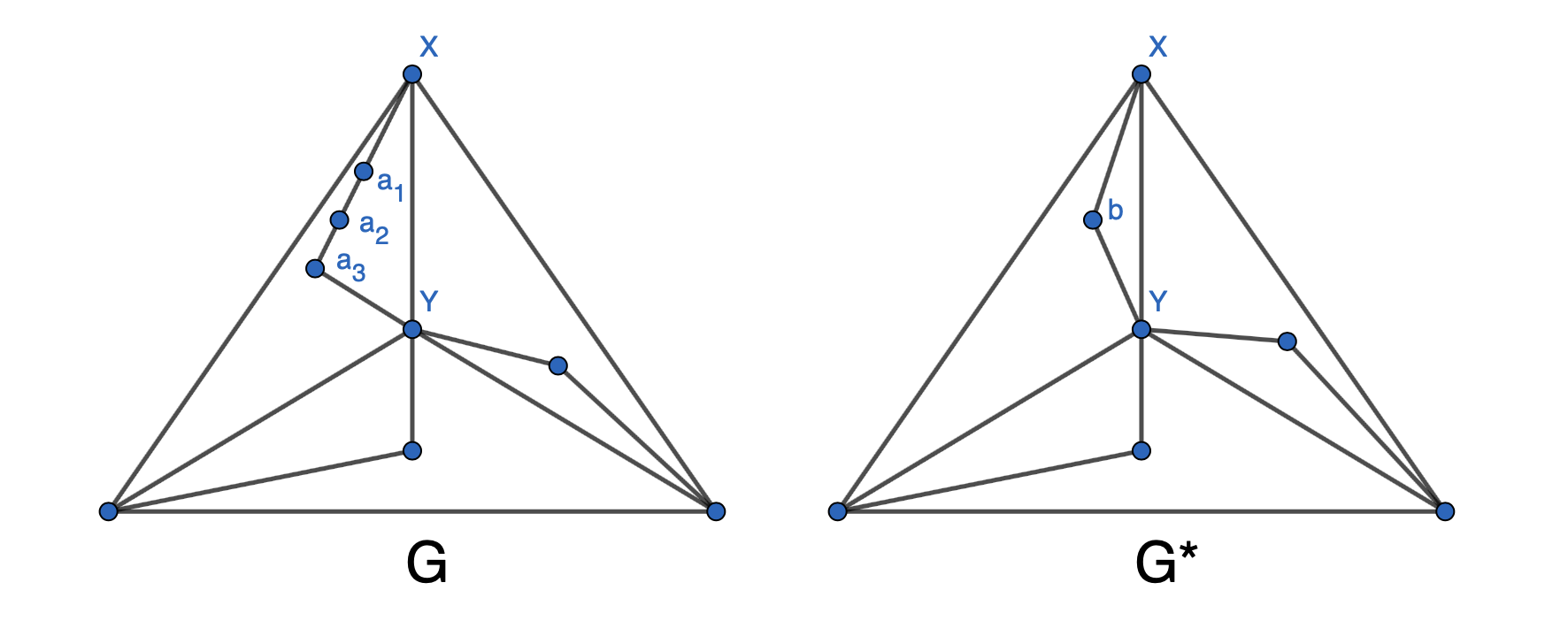}\caption{The Harris graph on the left has a 4-barnacle.  In the Harris graph on the right, it is replaced by a 2-barnacle}
\label{fig:4-barnacle-2-barnacle}
\end{figure}

\begin{theorem} 
\label{subbies}
Let $G$ be a Harris graph with a $k$-barnacle with $k>2$, and let $G^{*}$ be the graph obtained by replacing the $k$-barnacle by a $2$-barnacle. $G^*$ is a Harris Graph if and only if $G$ is a Harris Graph.
\end{theorem}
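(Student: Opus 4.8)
The plan is to verify separately that each of the three defining properties---toughness, being Eulerian, and non-Hamiltonicity---transfers between $G$ and $G^*$. The key observation throughout is that $G$ and $G^*$ differ only in the degree-$2$ interior of the barnacle: writing the $k$-barnacle as $x = v_0, v_1, \dots, v_{k-1}, v_k = y$ and the replacement $2$-barnacle as $x, w, y$, the graph $H := G - \{v_1, \dots, v_{k-1}\}$ is identical to $G^* - \{w\}$. I would record first that the Eulerian condition is immediate: passing from $G$ to $G^*$ leaves the degrees of $x$ and $y$ unchanged (each still sends exactly one edge into the barnacle) and replaces one path of degree-$2$ vertices by another, so every degree is even in $G^*$ iff it is in $G$; connectedness is likewise unaffected since the barnacle still joins $x$ to $y$.

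For non-Hamiltonicity I would use the rigidity of degree-$2$ vertices. Any Hamiltonian cycle must use both edges at every degree-$2$ vertex, so it is forced to traverse the entire barnacle as a single subpath from $x$ to $y$; deleting that subpath leaves a Hamiltonian path of $H$ from $x$ to $y$, and conversely any Hamiltonian $x$--$y$ path of $H$ closes up through the barnacle into a Hamiltonian cycle. Hence $G$ is Hamiltonian iff $H$ has a Hamiltonian $x$--$y$ path. The same argument applies verbatim to $G^*$ (the single vertex $w$ is again forced), so $G$ is Hamiltonian iff $G^*$ is, i.e. one is non-Hamiltonian iff the other is.

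Toughness is the step I expect to be the main obstacle, and I would handle it with a reduction lemma saying that interior barnacle vertices are never needed in a \emph{bad} cutset. Writing $c(G-S)$ for the number of components of $G-S$, suppose $S$ is a cutset of $G$ with $c(G - S) > |S|$ that uses as few interior barnacle vertices as possible, and suppose it uses at least one, say $v$. Since $\deg(v) = 2$, restoring $v$ (i.e. passing to $S' = S \setminus \{v\}$) changes the component count by at most one, so $c(G - S') \ge c(G - S) - 1 > |S| - 1 = |S'|$; thus $S'$ is still a bad cutset using fewer interior vertices, a contradiction. Therefore $G$ fails to be tough iff some $S \subseteq V(H)$ has $c(G - S) > |S|$, and the identical reduction (restoring $w$) shows $G^*$ fails to be tough iff some $S \subseteq V(H)$ has $c(G^* - S) > |S|$. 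Finally I would check that for every $S \subseteq V(H)$ the barnacle contributes the same number of components to $G - S$ and to $G^* - S$: if at least one of $x, y$ survives, all interior barnacle vertices attach to that endpoint and contribute nothing new in either graph (and $x, y$ get merged in both exactly when both survive), while if both $x, y \in S$ the interior forms exactly one extra component in each. Hence $c(G - S) = c(G^* - S)$ for all such $S$, and combining this with the reduction lemma gives that $G$ is tough iff $G^*$ is. The delicate points to get right are the degree-$2$ component-count estimate (the inequality $c(G-S') \ge c(G-S)-1$ and its strictness) and the case analysis showing the barnacle's contribution is genuinely identical in the two graphs.
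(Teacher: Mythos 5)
Your proposal is correct and follows essentially the same route as the paper: the Eulerian and non-Hamiltonicity arguments are the standard ones the paper uses, and your toughness argument rests on the same key observation---that restoring a degree-$2$ interior vertex to a toughness-violating set lowers the component count by at most one, so a minimal bad cutset avoids the barnacle interior. Your symmetric packaging (reducing both $G$ and $G^*$ to cutsets $S$ in the common subgraph $H$ and verifying $c(G-S)=c(G^*-S)$ there) is a somewhat cleaner and more uniform write-up than the paper's two asymmetric directions and its consecutive/non-consecutive case split, but the underlying idea is the same.
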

\begin{proof}

It is routine to show that $G$ is Eulerian if and only if $G^*$ is Eulerian. Since any Hamiltonian path must travel from $x$ to $y$ through the barnacle, regardless of how many vertices are on that path, $G^*$ is Hamiltonian if and only if $G$ is Hamiltonian.

We now consider toughness. Let $x,a_1, a_2, ..., a_{k-1},y$ be the vertices of the $k$-barnacle of $G$, and let $x,b,y$ be the vertices of the corresponding 2-barnacle of $G^*$, as in figure \ref{fig:4-barnacle-2-barnacle}

Assume $G^*$ is not tough. Then there exists a set $T$ of vertices of $G^*$, such that the number of components of $G^*-T$ is greater than $|T|$. We can then choose a set $S$ of vertices in $G$, identical to $T$, replacing $b$ by $a_1$ if $b \in{T}$.  Then number of components of $G-T$ is greater than $|T|$ and $G$ is not tough.

Now assume that $G$ is not tough.  Then there exists at least one set $S$ of vertices of $G$ such that the number of components of $G-S$ is greater than $|S|$. Choose $S$ to be the set that uses the fewest vertices from the path $(a_1, a_2, ..., a_{k-1})$.  Clearly $S$ cannot contain two consecutive vertices from this path.  Further, if $S$ contains two non-consecutive vertices from the path, removing one of them from $S$ will cause $|S|$ to decrease by one, and the number of components of $G-S$ to also decrease by one, so we can assume that $S$ contains at most one vertex on the path. If $S$ contains zero vertices from the path, then we can choose a set $T$ of vertices in $G^*$, corresponding to $S$, and $G^*$ will not be tough. 

So now assume that $S$ contains exactly one vertex from the path, call it $a$. The number of components of $G-S$ is greater than $|S|$.  Now what happens if we remove $S$ from the graph, but put $a$ back in? Since $a$ is degree 2, it can combine at most two components into one.  So the number of components of $G-(S-a)$ is larger than $|S-a|$.  Since $a$ was the only vertex we used from the path, we can choose a set $T$ of vertices in $G^*$, corresponding to $S-a$, and $G^*$ will not be tough.

\end{proof}
\section{Grafting}\label{graft}
One trivial way to create a family of Harris Graphs is to subdivide a barnacle, as done in Figure {\ref{fig:trivial}. We next use an idea from Liam Salib to show that any two Harris graphs can be combined into a new Harris graph. Let $G$ and $H$ be Harris graphs.  We will define a new operation, $G \oplus H$, that creates a new Harris graph from $G$ and $H$.
\begin{figure}
\centering
\includegraphics[scale=0.5]{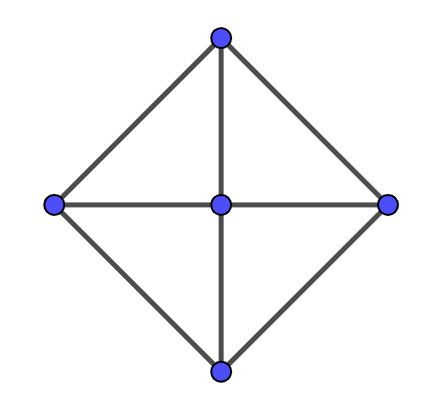}
\caption{$W_5$ - also known as the 5-wheel}
\label{fig:W5}
\end{figure}

The wheel graph $W_5$ is defined as a 4-cycle with a fifth vertex adjacent to every vertex in the cycle, as in figure \ref{fig:W5}. If $\{x,y\}$ is an edge, we can ``subdivide it by $W_5$" i.e., replace $\{x,y\}$ by $W_5$ and add edges from non-adjacent vertices of $W_5$ to $x$ and to $y$, as in figure \ref{fig:W5sub}.  Given two Harris graphs, $G$ and $H$, we can \emph{graft} $G$ and $H$ by subdividing one edge from each graph by $W_5$, and then adding edges between the corresponding degree three vertices of each as in Figure \ref{fig:graft}. We call the new graph $G \oplus H$

\begin{figure}
\centering
\includegraphics[scale=0.3]{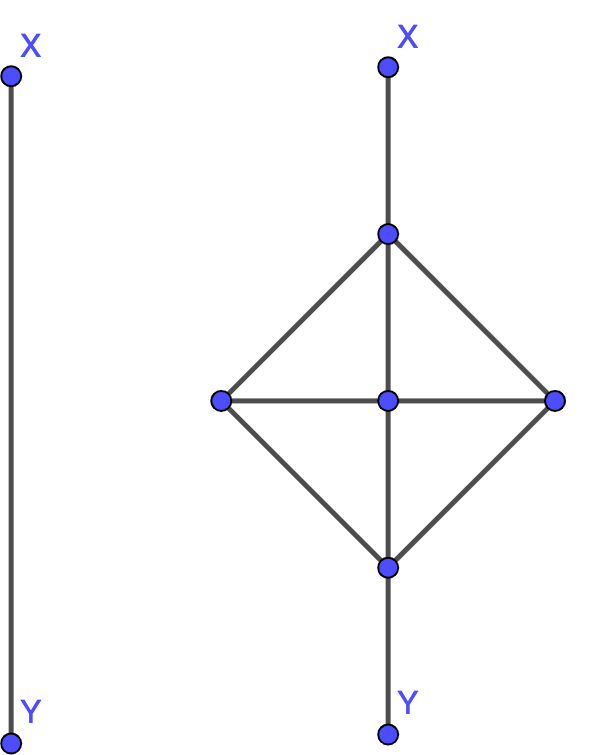} 
\caption{An edge subdivided by $W_5$}
\label{fig:W5sub}
\end{figure}

\begin{figure}
\centering
\includegraphics[scale=0.2]{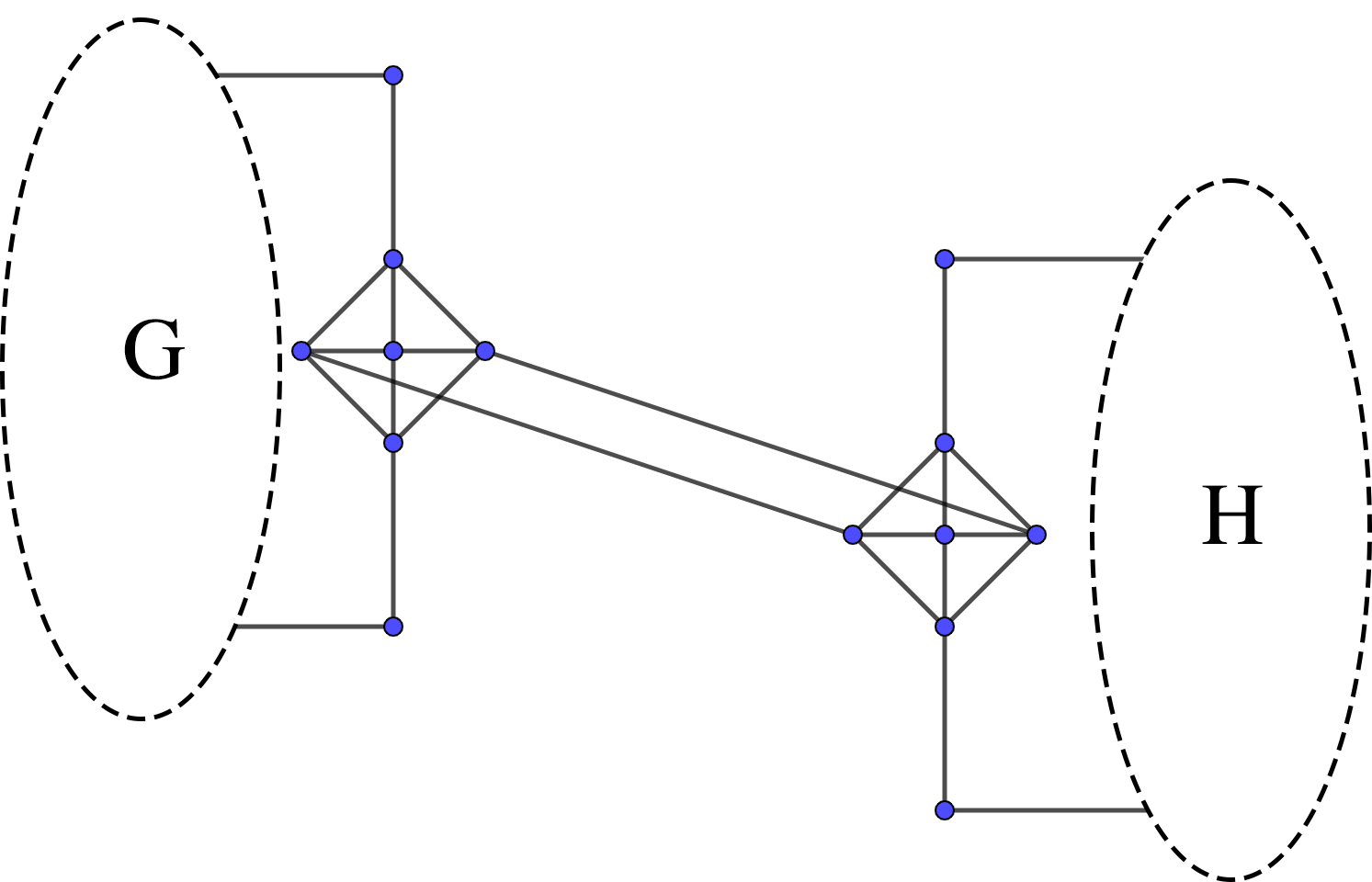} 
\caption{$G \oplus H$}
\label{fig:graft}
\end{figure}

\begin{theorem} 
\label{grafto}
If $G$ and $H$ are Harris graphs, then $G \oplus H$ is a Harris Graph.
\end{theorem}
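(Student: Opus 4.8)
The plan is to verify the three defining properties of a Harris graph for $G \oplus H$ separately: Eulerian, non-Hamiltonian, and tough. I would first set up notation carefully. Let $\{x_G, y_G\}$ be the subdivided edge of $G$ and $\{x_H, y_H\}$ the subdivided edge of $H$; each is replaced by a copy of $W_5$, whose four rim vertices acquire edges to the respective $x$ and $y$, and whose hub remains degree 4. The crucial structure is the two degree-three rim vertices in each copy (those not adjacent to both $x$ and $y$), which get joined across the two copies. I would track exactly which vertices change degree under the construction.

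\textbf{Eulerian.} This should be the easiest step. Since $G$ and $H$ are Eulerian, all their original vertices have even degree; subdividing $\{x,y\}$ removes one edge from each of $x$ and $y$ but then adds edges to the $W_5$ rim, and I would check that the net parity at $x$ and $y$ stays even. The hub of $W_5$ has degree 4. The rim vertices of $W_5$ need a parity count: the two ``corresponding degree three vertices'' start odd within the gadget and are bumped to even precisely by the grafting edges joining the two copies. Connectivity is immediate since everything is attached to the originally connected $G$ and $H$. I would present this as a routine degree bookkeeping argument.

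\textbf{Non-Hamiltonian.} Here I would argue that any Hamiltonian cycle of $G \oplus H$ would induce a Hamiltonian cycle in $G$ (or $H$), contradicting the hypothesis. The key observation is that the $W_5$ gadget substituted for an edge behaves, from the perspective of the rest of $G$, like a path or edge between $x_G$ and $y_G$: a spanning cycle must enter and leave the gadget region, and the way the rim and hub vertices connect forces the cycle to traverse the gadget in a constrained way. I would show that contracting each $W_5$ gadget (and the crossing edges) back to a single edge sends a hypothetical Hamiltonian cycle of $G \oplus H$ to a spanning cycle of $G$ that uses the edge $\{x_G, y_G\}$, which is impossible.

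\textbf{Toughness.} This is the step I expect to be the main obstacle, and where most of the work lies. I would need to show that for every vertex cut $S$ of $G \oplus H$, the number of components of $(G \oplus H) - S$ is at most $|S|$. The natural strategy is to split $S$ into the part $S_G$ lying in the $G$-side, the part $S_H$ in the $H$-side, and the part $S_W$ inside the two $W_5$ gadgets plus crossing edges, and to bound components by playing off the toughness of $G$ and of $H$ separately. The difficulty is that the grafting edges and the $W_5$ gadgets create a narrow bridge between the two halves, so I must carefully verify that cutting vertices there cannot produce more components than the vertices spent — in particular that the $W_5$ gadget is itself highly connected (it has a hub adjacent to all rim vertices), so removing few vertices from a gadget cannot shatter it. I would handle the gadget connectivity with a small case analysis on how $S$ meets each gadget, then combine with the toughness inequalities inherited from $G$ and $H$ to close the argument.
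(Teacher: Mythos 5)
Your plan is correct and follows essentially the same route as the paper's proof: check the three properties separately, with Eulerian by parity bookkeeping, non-Hamiltonicity by showing a spanning cycle of $G \oplus H$ would force a Hamiltonian cycle of $G$ (or $H$) through the subdivided edge, and toughness by splitting the deleted set among the $G$-side, the $H$-side, and the two $W_5$ gadgets and doing a finite case analysis on the gadgets. You actually supply more detail than the paper, which dismisses the first two properties as ``straightforward'' and the gadget analysis as ``by exhaustion.''
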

\begin{proof}

It is straightforward to show that $G \oplus H$ is Eulerian and non-Hamiltonian. To show that $G \oplus H$ is tough, we first observe that $G$,$H$, and the graph induced by the two copies of $W_5$ are tough. Deleting vertices solely from $G$ and $H$ will not violate the toughness condition, using the same argument as the proof of Theorem \ref{subbies}. And it can be shown by exhaustion that deleting any subset of size $s$ from the $W_5$'s will not cause the components of  $G \oplus H$ to increase by more than $s$.  

\end{proof}
\section{Barnacle-free Harris graphs}
For a few years, it was conjectured that barnacle-free Harris graphs do not exist. Barnacles are a convenient tool for forcing a candidate for a Hamiltonian path to go in a particular direction, while preserving the Eulerian property. In 2023, MMSS and PROMYS student Lorenzo Lopez discovered the order-$13$ barnacle-free Harris graph pictured in \ref{fig:barnaclefree}.

\begin{figure}
\centering
\includegraphics[scale = 0.4]{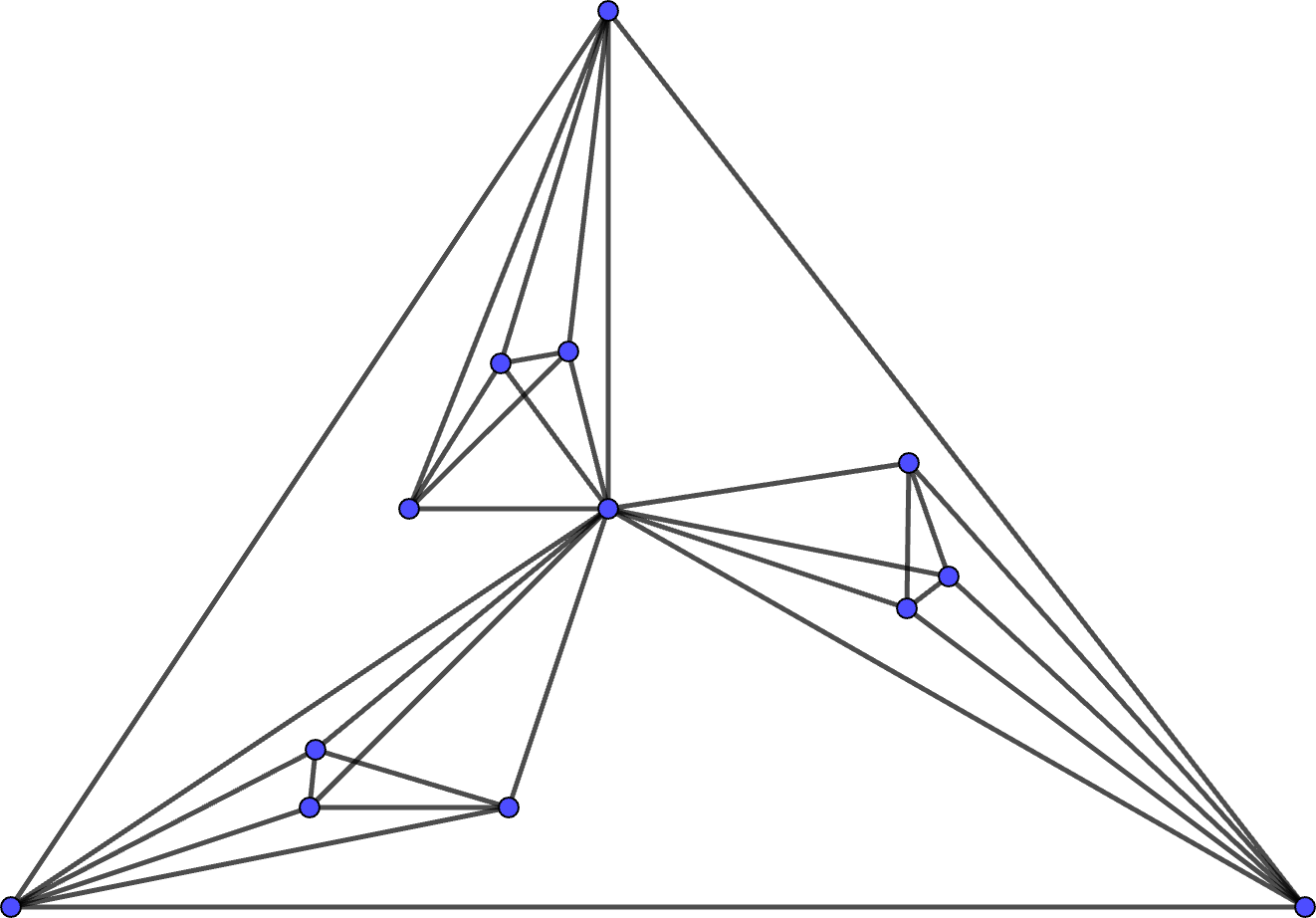} 
\caption{A barnacle free Harris graph}
\label{fig:barnaclefree}
\end{figure}

We now show that the Lopez graph is an example of a minimal barnacle-free Harris graph. 

\begin{theorem}
\label{ChengLopez}
The minimal barnacle-free Harris Graph has order 13.
\end{theorem}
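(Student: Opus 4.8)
The plan is to prove the statement in two directions: that a barnacle-free Harris graph of order $13$ exists, and that none exists of smaller order. For the existence direction I would verify directly that the Lopez graph of Figure \ref{fig:barnaclefree} has all four required properties: it is connected with all degrees even (Eulerian), it is tough (checked by examining its vertex cuts), it is non-Hamiltonian (a finite check), and it is barnacle-free. The observation that organizes the whole argument is that, for an Eulerian graph, being \emph{barnacle-free} is equivalent to having minimum degree at least $4$: any degree-$2$ vertex $b$ with distinct neighbors $x$ and $y$ is the interior of the path $x, b, y$, which is a $2$-barnacle, and conversely every barnacle contains a degree-$2$ vertex; since Eulerian graphs have even degrees, excluding degree $2$ forces every degree to be at least $4$.

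For the minimality direction I would assume $G$ is a barnacle-free Harris graph on $n \le 12$ vertices and derive a contradiction, splitting into ranges of $n$ and invoking known Hamiltonicity criteria, each of which contradicts the assumption that $G$ is non-Hamiltonian. For $n \le 8$, minimum degree $\ge 4 \ge n/2$, so Dirac's theorem already forces $G$ to be Hamiltonian; hence no barnacle-free Harris graph exists below order $9$. This disposes of more than half the range essentially for free and isolates the delicate cases $9 \le n \le 12$.

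For $n = 11$ and $n = 12$ I would appeal to a toughness-based strengthening of Dirac's theorem due to Jung: every $1$-tough graph on $n \ge 11$ vertices in which each pair of nonadjacent vertices $u, v$ satisfies $d(u) + d(v) \ge n - 4$ is Hamiltonian. Since minimum degree $\ge 4$ gives $d(u) + d(v) \ge 8 \ge n - 4$ whenever $n \le 12$, and $G$ is tough by hypothesis, Jung's theorem makes $G$ Hamiltonian, a contradiction. This is precisely the range in which the bound $n-4$ is still met: at $n = 13$ one would need $d(u) + d(v) \ge 9$, which minimum degree $4$ no longer guarantees, so the threshold $13$ is not accidental.

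The main obstacle is the remaining window $n = 9$ and $n = 10$, where minimum degree $4$ is too small for Dirac and the order is too small for Jung's $n \ge 11$ hypothesis. Here I would proceed by a finite structural analysis: the toughness condition bounds the independence number by $\lfloor n/2 \rfloor$ (removing the complement of an independent set of size $\alpha$ leaves $\alpha$ isolated vertices, forcing $\alpha \le n - \alpha$), and the requirement that all degrees be even and at least $4$ restricts the admissible degree sequences to a short list. For each such sequence one enumerates the tough Eulerian realizations and checks that every one is Hamiltonian; equivalently, one confirms by exhaustive search that no tough, non-Hamiltonian, minimum-degree-$4$ Eulerian graph exists on $9$ or $10$ vertices. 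I expect this case check, rather than any single inequality, to be where the real work lies. Where possible I would prune it using the Chvátal--Erdős theorem, trying to verify $\kappa(G) \ge \alpha(G)$ (which would immediately yield Hamiltonicity), but since minimum degree $4$ caps $\kappa(G)$ at $4$ this criterion cannot settle every configuration on its own, so the finite verification remains the crux.
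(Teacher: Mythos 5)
Your proposal is correct and follows essentially the same route as the paper: reduce barnacle-freeness of an Eulerian graph to minimum degree at least $4$, apply Jung's theorem ($1$-tough, $\sigma_2 \ge n-4$, $n \ge 11$ implies Hamiltonian) to eliminate orders $11$ and $12$, and fall back on exhaustive computational verification for the remaining small orders, with the Lopez graph witnessing existence at order $13$. Your only real addition is using Dirac's theorem to dispose of $n \le 8$ analytically, a small tidying of the range the paper instead covers by its enumeration of all Harris graphs through order $9$.
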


\begin{proof}
As we see in the appendix, all graphs of orders $7$ through $9$ have barnacles. We also generated and checked all Harris graphs of order $10$, which were also barnacle-free. For orders $11$ and above, we will be using a lemma proved in \cite{jung:1978}. 
\begin{lemma}
Let $G$ be a $1$-tough graph on $n \geq 11$ vertices with $\sigma_2 \geq n - 4$. Then G is Hamiltonian. Here, $\sigma_2$ denotes the minimum degree sum taken over all independent pairs of vertices of G. 
\end{lemma} 
Because Harris graphs are Eulerian, the minimum degree of a barnacle-free Harris graph is $4$. This implies that the lowest possible degree sum of any independent pair of vertices must be at least $4 + 4 = 8.$ Based on the lemma, if $8 \geq n - 4$ (i.e., $n \leq 12$), the graph will be Hamiltonian. Therefore, a barnacle-free Harris graph must have at least degree $13$. 
\end{proof}

\section{Other families of Harris graphs}\label{fams-of-HG} 
Previously, we explored ways to generate infinitely many Harris graphs by either subdividing edges or grafting two Harris graphs together. In this section, we want to explore families of Harris graphs generated by the repeated addition of a specific type of structure to a "base" Harris Graph. We do this to better understand a broader question: what are the primal building blocks of Harris graphs? By understanding the types of "pieces" we can add to certain Harris graphs to generate other ones, we aim to uncover structural truths about Harris graphs.

\begin{figure}
\centering
\includegraphics[scale = 0.4]{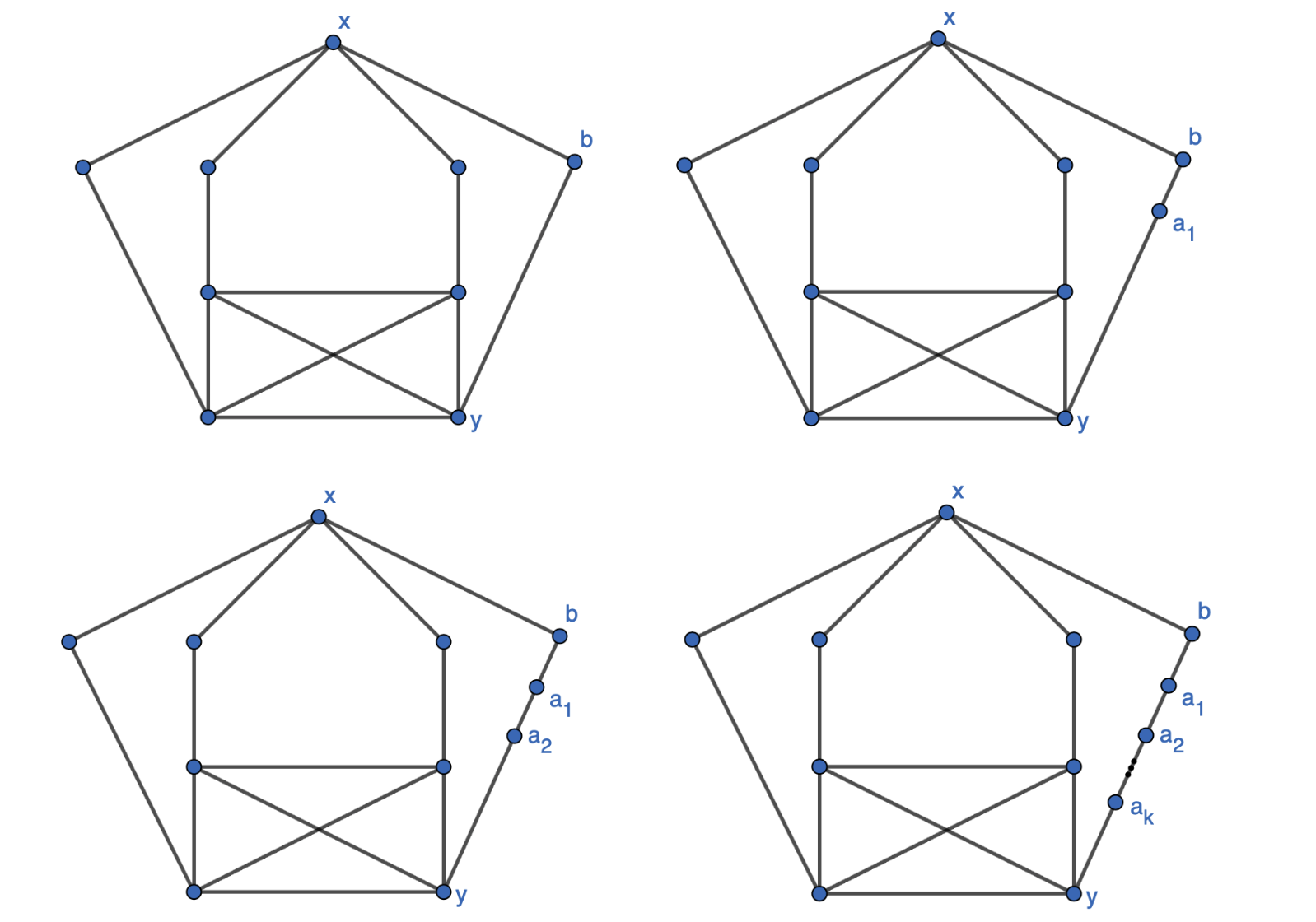} 
\caption{A basic Harris graph, yielding a trivial infinite family}
\label{fig:trivial}
\end{figure}

\subsection{Using the Hirotaka graph as a base}

We call the unique Harris Graph of order 7 the \emph{Hirotaka Graph}. It was first used in \cite{chvatal:1991} as an example of a tough, non-Hamiltonian graph. Hirotaka Yoneda rediscovered it as a Harris graph in $2018$.

\begin{figure}
\centering
\includegraphics[scale = 0.55]{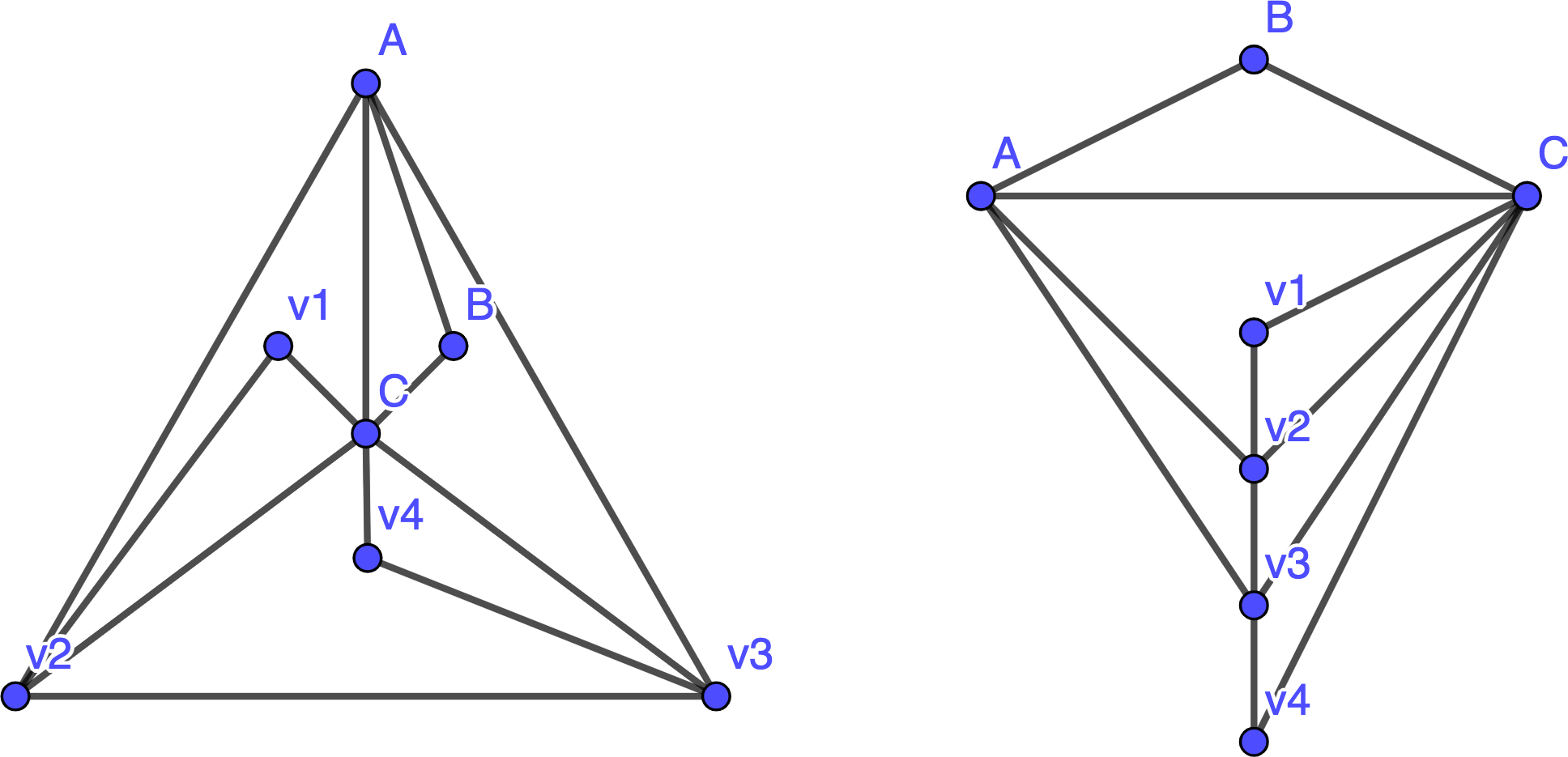} 
\caption{Two embeddings of the Hirotaka graph}
\label{fig:Hirotaka-two-embeddings}
\end{figure}

Using the Hirotaka graph as a base, we create a family by repeatedly applying the following algorithm:

\begin{enumerate}
    \item Label vertices $A$, $B$, $C$, $v_1, ..., v_k$ in the manner of the embedding on the right in Figure \ref{fig:Hirotaka-two-embeddings}
    \item Add two new vertices $v_{k+1}$ and $v_{k+2}$ to the existing graph.
    \item Add the following edges, as shown in figure \ref{fig:hirotaka-post-algorithm}. $\{v_k, v_{k+1}\}$,$\{v_{k+1}, v_{k+2}\}$,$\{A, v_k\}$, $\{A, v_{k+1}\}$, $\{C,v_{k+1}\}$, $\{C, v_{k+2}\}$
\end{enumerate}

\begin{proof}
Let $G$ be the result of the algorithm. We now show that $G$ is a Harris graph. It is trivial to show that $G$ is Eulerian. To show that $G$ non-Hamiltonian: Every Hamiltonian cycle would have to contain the path $A-B-C-v_{k+2}-v_{k+1}$. After this point, it is not possible to include both $v_1$ and $A$ on the cycle.  To show $G$ is tough: since $\{A,C\}$ is a dominating set, and $A \sim C$, every cutset $S$ will have to include $A$ or $C$. Assume wlog that $A\in S$. Now $C$ dominates $G-A$, so we know $C\in S$. Deleting any vertex in $G-\{A,C\}$ will result in at most one more component, therefore $G$ is tough.
\end{proof}

\begin{figure}
\centering
\includegraphics[scale = 0.8]{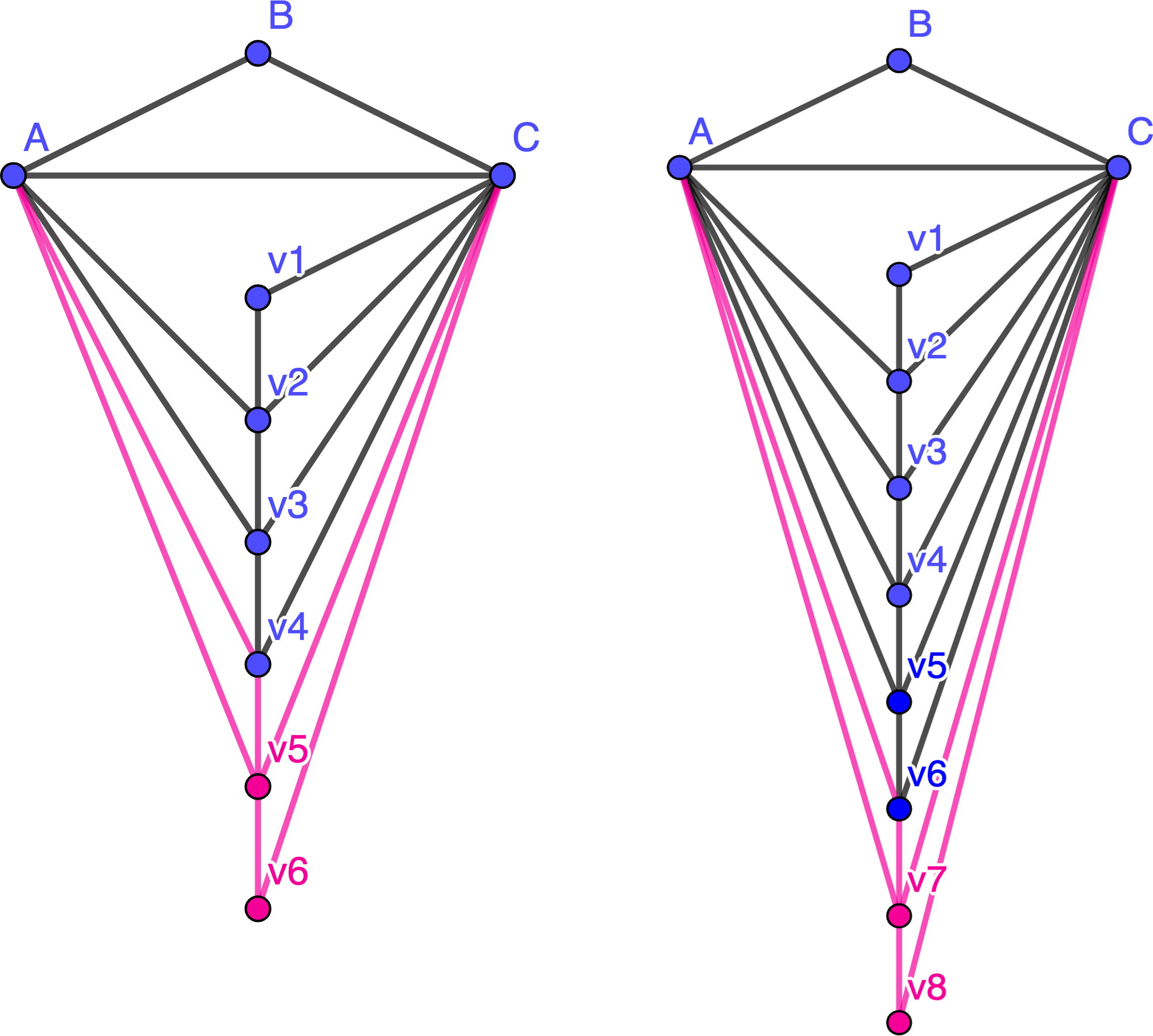} \\
\caption{Two runs of the algorithm on the Hirotaka graph}
\label{fig:hirotaka-post-algorithm}
\end{figure}

\subsection{Using the Shaw graph as a base}
The Harris graph in figure \ref{fig:shaw-9-14} was discovered by Douglas Shaw in 2013. We create a family using this graph as a base by repeatedly adding two new 2-barnacles and a new $K_4$ as shown:

\begin{figure}
    \centering
    \includegraphics[scale = 0.6]{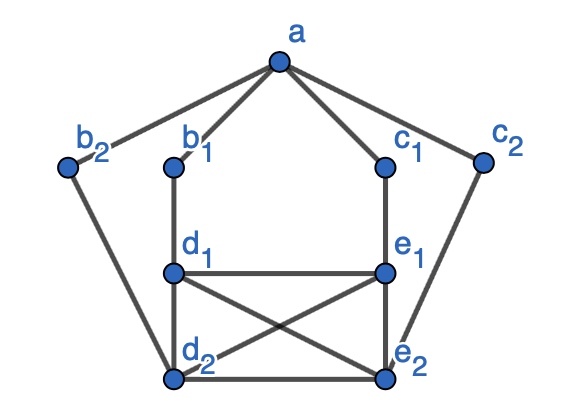} \\
    \caption{Shaw-9-14}
    \label{fig:shaw-9-14}
\end{figure}

\begin{enumerate}
    \item Add four new vertices, $b_3$, $c_3$, $d_3$, and $e_3$.
    \item Add the following edges: $\{a, b_3\}$, $\{a, c_3\}$, $\{b_3, d_3\}$, $\{c_3, e_3\}$, $\{d_2, e_3\}$, $\{e_2, d_3\}$, $\{d_2, d_3\}$, $\{e_2, e_3\}$, $\{d_3, e_3\}$
\end{enumerate}

The resulting graphs we see in figure \ref{fig:shaw-post-algorithm} are Harris graphs. To keep growing that graph into more Harris graphs, we can run the algorithm described above again. We add four new vertices — two that will create another $K_4$ and two that we will use to create two more barnacles. Then, we will add a $K_4$ connected to the previous $K_4$ and add two barnacles. One end of each barnacle will connect to $a$ and the other end will connect to the vertices of the $K_4$ that aren't connected to the other $K_4$.

\begin{figure}
    \centering
    \includegraphics[scale = 0.36]{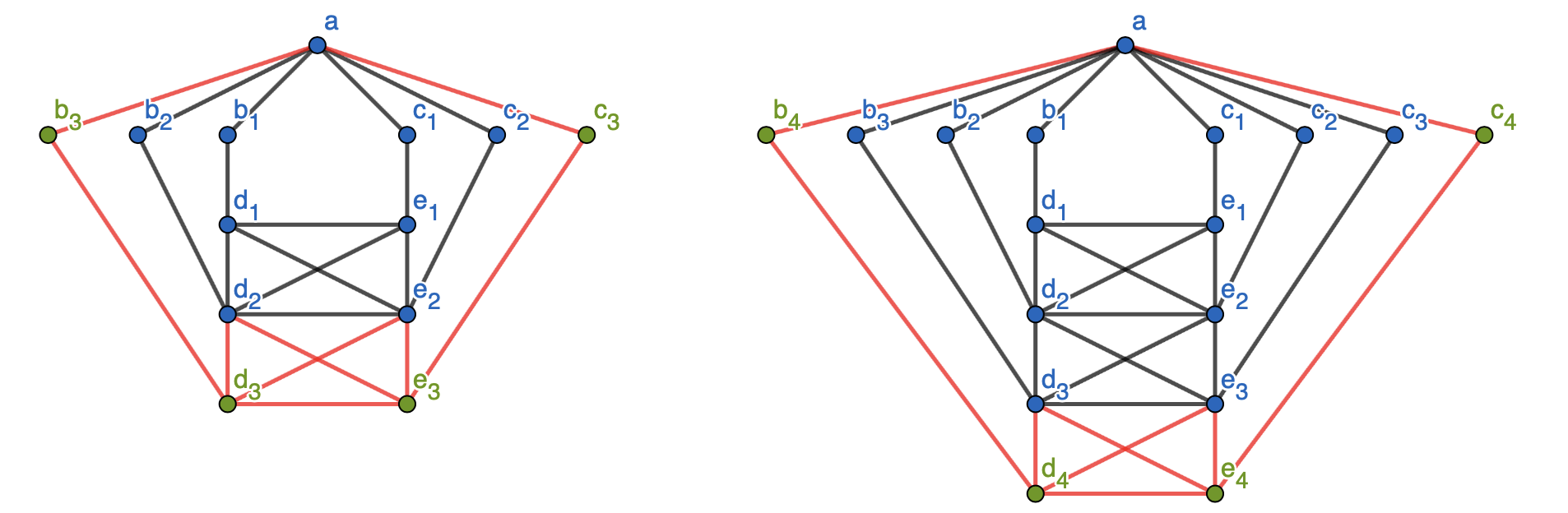} \\
    \caption{Shaw-9-14, after the algorithm is run on it}
    \label{fig:shaw-post-algorithm}
\end{figure}

\subsection{Using the Justine graph as a base}
Justine Dugger-Ades discovered a family of Harris graphs that involves nesting odd cycles.  For $n$ odd, create two copies of the cycle graph $C_n$: $(a_1,a_2,...,a_n)$ and $(b_1,b_2,...,b_n)$. Add edges between all pairs of vertices $\{a_i,b_i\}$.  Now add 2-barnacles between every one of these pairs as shown in figure \ref{fig:Justine}.

It is trivial to show these graphs are Eulearian and tough. Any Hamiltonian path would necessarily have to go through the 2-barnacles, and because such a path would have to alternate between the cycles, it could not terminate at its starting point.
\begin{figure}
    \centering
    \includegraphics[scale = 0.36]{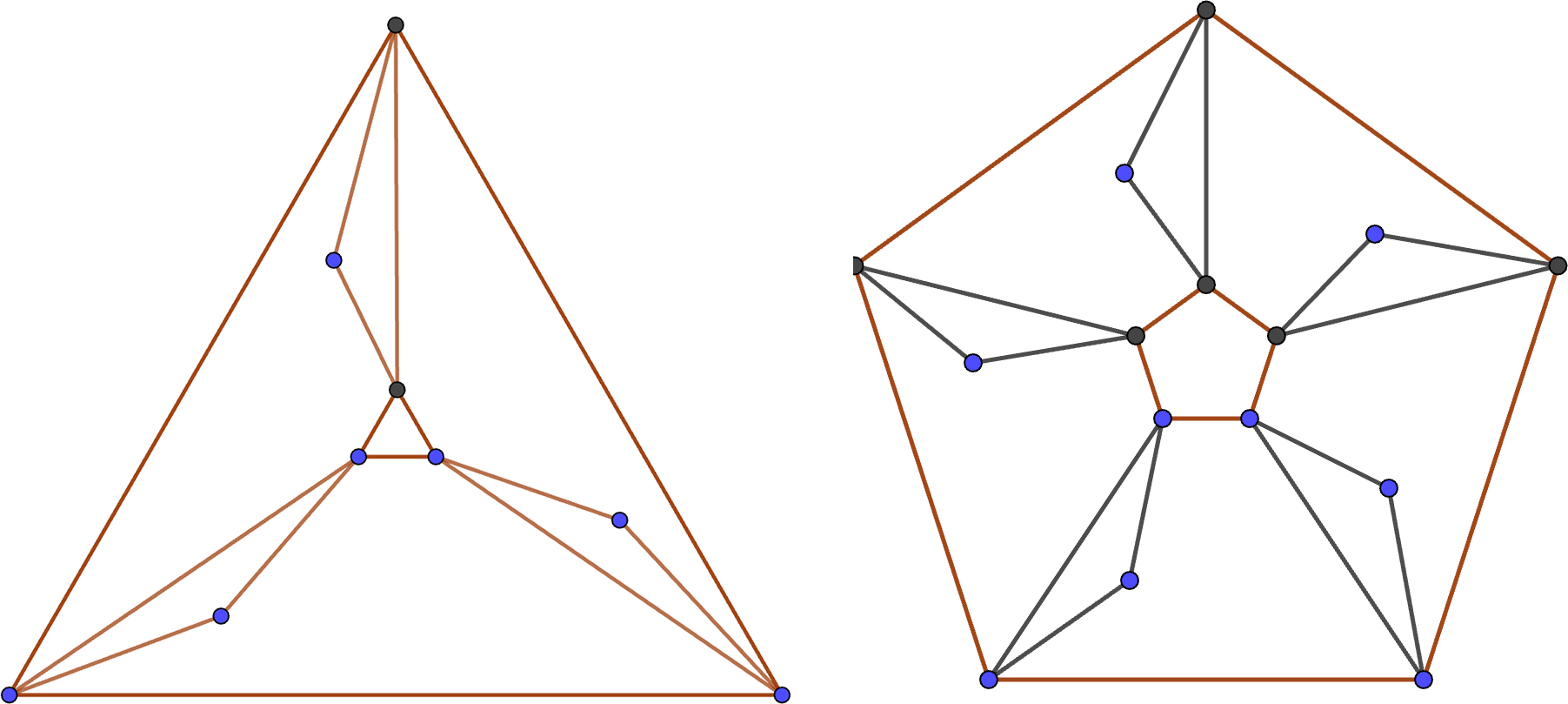} 
    \caption{First two members of the Justine family}
    \label{fig:Justine}
\end{figure}

\subsection{Using any tough non-Hamiltonian graph as a base}
We present a process called ``flowering" that will convert a tough non-Hamiltonian graph $G$ into a tough, Eulerian, non-Hamiltonian graph, and thus a Harris graph $G^*$. Let $G$ be such a graph. By \cite{euler:1736}, $G$ has an even number of odd degree vertices. Choose an odd degreed vertex $v_1$. Choose another odd degreed vertex with minimal distance to $v_1$ and label it $v_k$. Let a shortest path between $v_1$ and $v_k$ be $v_1,v_2,...v_k$. 

Add a 2-barnacle between $v_1$ and $v_2$. (In other words, create a new vertex $c$ and the edges $\{v_1,c\}$ and $\{c,v_2\}$.)  If $v_2$ was odd-degreed, then we are done because $v_n=v_2$ by minimality, and $v_n$ is now even degreed. If $v_2$ was even degreed, it is now odd degreed after adding the 2-barnacle, and we repeat the process for $v_2$ and $v_3$. When $v_n$ is reached, both $v_1$ and $v_n$ are even degreed, and we can repeat the process for every pair of odd vertices.

We call the new graph $G^*$.
\begin{theorem}
    \label{gandini}
    If $G$ is a tough non-Hamiltonian graph, then the flowering process will create a Harris graph, $G*$.
\end{theorem}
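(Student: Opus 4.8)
The plan is to check that the flowered graph $G^*$ has the three defining properties of a Harris graph — it is Eulerian (connected and even-degreed), non-Hamiltonian, and tough — after first confirming that the construction is well defined. The only thing needed for the latter is Euler's observation, already used in the construction, that $G$ has an even number of odd-degree vertices, so that they can be exhausted in pairs; the content is to verify that processing one pair repairs the parity of exactly that pair and leaves every other degree untouched, so that iterating ends with a graph in which every vertex is even.

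For the Eulerian property, connectivity is immediate: $G$ is connected (being tough), and each new vertex $c$ is attached to vertices already present, so $G^*$ stays connected. For the degrees the key is the minimality in the choice of $v_k$: since $v_k$ is a closest odd vertex to $v_1$ and $v_1,\dots,v_k$ is a shortest path, every interior vertex $v_2,\dots,v_{k-1}$ must be even-degreed, for an odd interior vertex would be a strictly closer odd vertex. Each barnacle placed on $\{v_i,v_{i+1}\}$ flips the parity of $v_i$ and $v_{i+1}$ and adds one degree-$2$ (hence even) vertex. Walking the barnacles along the path therefore flips $v_1$ once (odd to even), flips each interior vertex exactly twice (even back to even), and flips $v_k$ once (odd to even); nothing else changes parity. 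Since the number of odd vertices is even and drops by two per pair, the process terminates with all degrees even.

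Non-Hamiltonicity follows the barnacle-forcing argument of Theorem~\ref{subbies}. Each inserted vertex $c$ has degree $2$, so any Hamiltonian cycle of $G^*$ must contain both edges $\{v_i,c\}$ and $\{c,v_{i+1}\}$ and hence traverses the detour $v_i\!-\!c\!-\!v_{i+1}$. The original edge $\{v_i,v_{i+1}\}$ (present in $G$, the $v_i$ being consecutive on a path of $G$) cannot also lie on the cycle, for then $v_i,v_{i+1},c$ would close into a triangle, too short to be a spanning cycle. Replacing each detour $v_i\!-\!c\!-\!v_{i+1}$ by the edge $\{v_i,v_{i+1}\}$ thus converts a Hamiltonian cycle of $G^*$ into one of $G$, contradicting the hypothesis.

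Toughness is the crux, and this is where I expect the real work. Suppose $G^*$ is not tough, witnessed by a set $T$ with $c(G^*-T)>|T|$. As in Theorem~\ref{subbies}, a degree-$2$ barnacle vertex reinserted into $G^*-T$ merges at most two components into one, so deleting every barnacle vertex from $T$ keeps the deficiency $c(G^*-T)-|T|$ positive; hence we may assume $T\subseteq V(G)$. With $T$ free of barnacle vertices one checks that $c(G^*-T)=c(G-T)+b(T)$, where $b(T)$ counts the barnacles both of whose endpoints lie in $T$ (each such barnacle vertex is isolated in $G^*-T$, while every other is absorbed into a component of $G-T$). If $b(T)=0$ we are done, since $T$ then witnesses that $G$ is not tough. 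The main obstacle is the case $b(T)>0$: deleting both endpoints of a barnacle manufactures a spurious isolated component with no counterpart in $G$, and the naive transfer used for Theorem~\ref{subbies} fails because the two endpoints are \emph{adjacent} in $G$, so returning one of them to the graph may merge several components at once. I would attack this by taking $T$ of minimum size (and, among these, with $b(T)$ minimum) and showing that the extra components charged to the isolated barnacle vertices can always be re-charged against the toughness of $G$ — in effect, that an edge carrying a barnacle cannot act as a tight $2$-cut once minimality of $T$ is exploited. Making this exchange argument precise, rather than the Eulerian or non-Hamiltonian parts, is the heart of the proof.
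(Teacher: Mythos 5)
Your Eulerian and non-Hamiltonian arguments are correct and essentially the paper's (your parity bookkeeping along the shortest path is more explicit than the paper's one-line appeal to the construction, and your detour-contraction argument for non-Hamiltonicity is the same idea stated more carefully). The problem is toughness. You reduce correctly to a witness $T\subseteq V(G)$ with $c(G^*-T)=c(G-T)+b(T)>|T|$, where $b(T)$ counts barnacles with both endpoints in $T$, and you observe that toughness of $G$ only disposes of the case $b(T)=0$. But for $b(T)>0$ you offer a strategy (take $T$ minimal, ``re-charge'' the isolated barnacle vertices against the toughness of $G$) rather than an argument. That case is exactly where the content of the theorem lives, so as written the proposal does not prove the statement.

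The difficulty you flagged is genuine, and it is worth saying that the paper does not really dispatch it either: the paper's justification is that ``the ability to delete two vertices to add one to the component count does not break toughness,'' which is not a valid general principle, because the two deleted vertices may already be fully accounted for by other components. Indeed, a tough graph can contain an adjacent pair $\{u,v\}$ that already achieves equality: take $K_4$ on $\{u,v,w,z\}$ together with a vertex $x$ joined to $u,v$ and a vertex $y$ joined to $w,z$. This graph is tough and $c(G-\{u,v\})=2=|\{u,v\}|$, so barnacling the edge $\{u,v\}$ yields a graph in which deleting $\{u,v\}$ leaves three components. (This particular $G$ is even-degreed and Hamiltonian, so it does not contradict the theorem itself, but it refutes the claim that adding a 2-barnacle to an arbitrary edge of a tough graph preserves toughness.) A complete proof must therefore exploit \emph{which} edges the flowering process barnacles --- they lie on shortest paths between odd-degree vertices --- or add a hypothesis; neither your sketch nor the paper's proof does this, so the exchange argument you defer is precisely the missing piece.
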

\begin{proof}

\textit{$G^*$ is Eulerian.}
As shown above, the flowering process results in a graph with no odd vertices. 

\textit{$G^*$ is tough.}
Locally, flowering only amounts to adding 2-barnacles. Consider the 2-barnacle $v_1-c-v_2$ where $c$ is the vertex that was added and $\{v_1-v_2\}$ was an original edge in the graph. Adding this 2-barnacle only creates the possibility of disconnecting the single vertex $c$. But to achieve this, we would need to delete both $v_1$ and $v_2$. The ability to delete two vertices to add one to the component count does not break toughness. If there is a path of 2-barnacles, then we still would need to delete $k$ vertices to disconnect $k-1$ 2-barnacles. Notice that this is the only change that flowering does to the $G$, so if $G$ is tough, then so is $G*$. 

\textit{$G^*$ is non-Hamiltonian.}
Consider edge $e=\{a,b\}\in G$.  Since $G$ is non-Hamiltonian, there is no Hamiltonian cycle using $e$. Adding a 2-barnacle $a-c-b$ to $G$ adds the constraint of needing the path $a-c-b$ in a Hamiltonian cycle. But there was no Hamiltonian cycle using the edge $e$ to start with, so there still cannot be any Hamiltonian cycle in the flowered graph.
\end{proof}

Figure \ref{fig:flowers} shows a flowered $K_6$ and a flowered Petersen graph. Theorem \ref{gandini} states that these are Harris graphs.
\begin{figure}
    \centering
    \includegraphics[scale = 0.36]{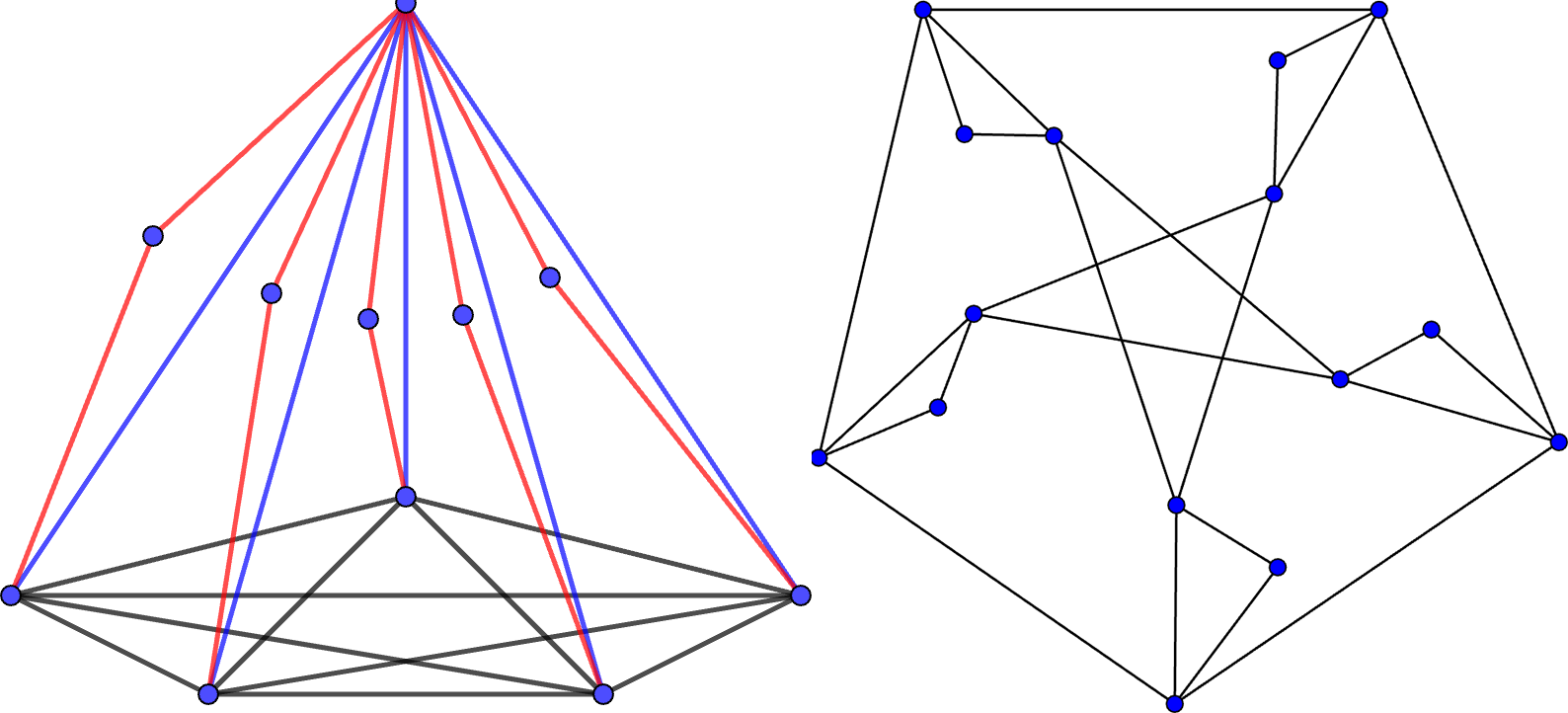} 
    \caption{A flowered $K_6$ and a flowered Petersen graph}
    \label{fig:flowers}
\end{figure}
\section{Future Directions}
\subsection{Grafting:}
We described an algorithm that can be used to combine two Harris together to generate a new Harris Graph. We used the wheel graph to do so. We believe that any complete graph should work in place of the wheel graph. We wish to classify what properties a graph has to have in order to be used in the grafting process.

\subsection{Barnacle-free Harris graphs:} The example shown in \ref{fig:graft} is the only Barnacle-free Harris Graph we know of so far, besides the ones obtained by grafting it to itself. Whether more barnacle-free Harris graphs exist, and if the order-13 one is uniquely minimal, are open problems. (The latter can probably be solved by brute force)

\subsection{Other families of Harris graphs: } We're also interested in seeing what other structural parts of a specific base Harris Graph can be exploited to create an infinite family of Harris graphs.

\subsection{The number of Harris graphs for a specific order: } Via brute force, we were able to discover the number of Harris graphs that exist for orders $7$ through $12$ \ref{tab:numHG}. However, because of the number of graphs of order $13$ that exist, it is very difficult to check all graphs of order $13$ and above to count the number of Harris graphs of those orders. Whether or not a recursive function exists that maps the number of Harris Graphs of order $n - 1$ to the number of Harris Graphs of order $n$ is also of interest.

\section{Appendices}
\subsection{Appendix 1}
Using code written by Shubhra Mishra and Marco Troper, we have determined the number of Harris graphs of orders $7$ through $10$.  Sean A. Irvine wrote code to determine the number of Harris Graphs of orders $11$ and $12$ (\cite{oeis-github:2023}, \cite{oeis}). Table \ref{tab:numHG} shows the numbers of Harris graphs of various orders. Tables \ref{tab:pictures1}, \ref{tab:pictures2}, and \ref{tab:pictures3} show visualizations of all Harris graphs through order $9$.

\begin{table}[htbp]
    \centering
    \begin{tabular}{c|c}
        Order & Number of Harris graphs \\
        \hline
         7 & 1 \\
         8 & 3 \\
         9 & 26 \\
         10 & 340 \\
         11 & 7397 \\
         12 & 233608
    \end{tabular}
    \caption{Number of Harris Graphs of order $n$}
    \label{tab:numHG}
\end{table}

\begin{table}[htbp] 
  \centering
  \captionsetup[subfloat]{labelformat=empty}
  \begin{tabular}{c c c} 
    \subfloat[Order 7, 6-4-4-4-2-2-2]{\includegraphics[width=0.3\linewidth]{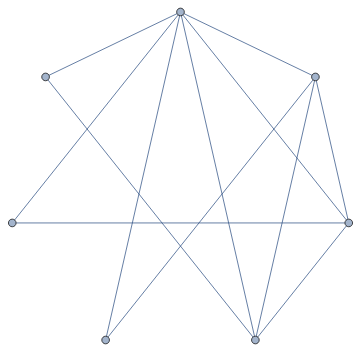}} &
    \subfloat[Order 8, 6-4-4-4-2-2-2-2]{\includegraphics[width=0.3\linewidth]{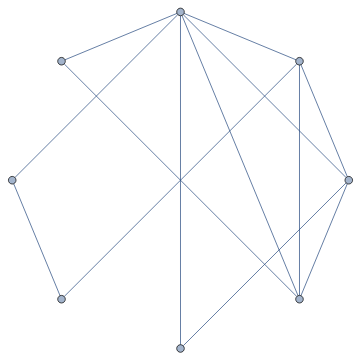}} &
    \subfloat[Order 8, 6-4-4-4-4-2-2-2]{\includegraphics[width=0.3\linewidth]{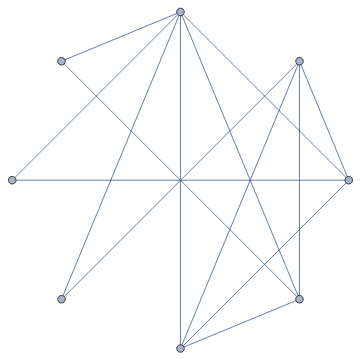} } \\  
    \subfloat[Order 8, 4-4-4-4-4-2-2-2]{\includegraphics[width=0.3\linewidth]{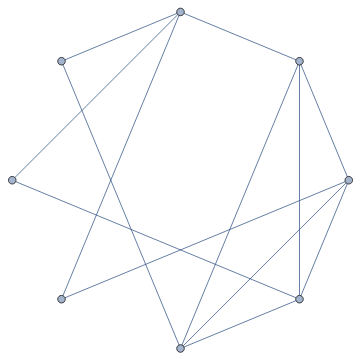}} &
    \subfloat[Order 9, 4-4-4-4-4-2-2-2]{\includegraphics[width=0.3\linewidth]{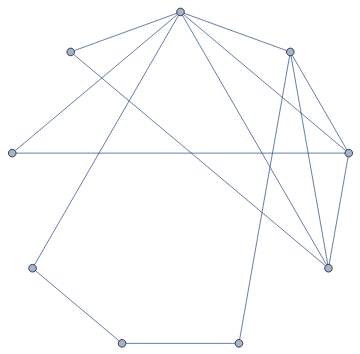}} &
    \subfloat[Order 9, 6-4-4-4-2-2-2-2-2]{\includegraphics[width=0.3\linewidth]{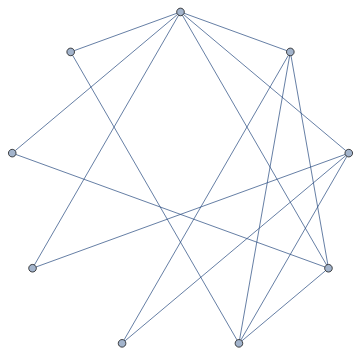}} \\
    \subfloat[Order 9, 6-4-4-4-4-2-2-2-2]{\includegraphics[width=0.3\linewidth]{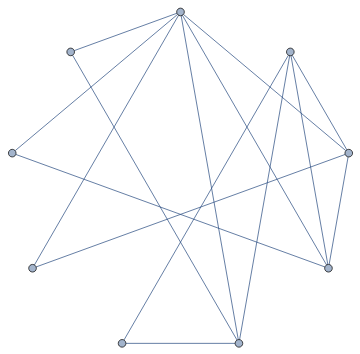}} &
    \subfloat[Order 9, 6-4-4-4-4-2-2-2-2]{\includegraphics[width=0.3\linewidth]{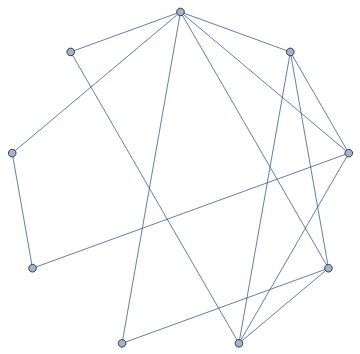}} &
    \subfloat[Order 9, 6-6-4-4-4-4-2-2-2]{\includegraphics[width=0.3\linewidth]{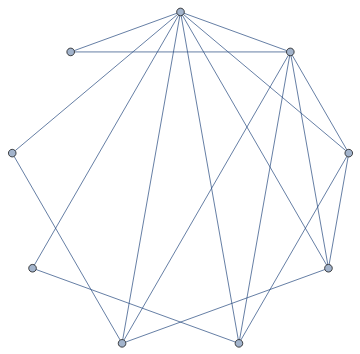}} \\
    \subfloat[Order 9, 6-4-4-4-4-4-2-2-2]{\includegraphics[width=0.3\linewidth]{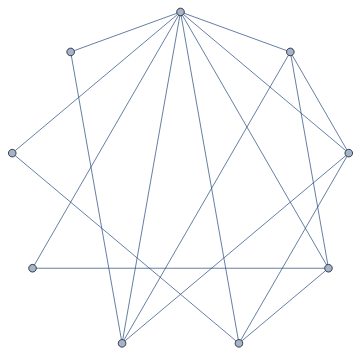}} &
    \subfloat[Order 9, 4-4-4-4-4-2-2-2-2]{\includegraphics[width=0.3\linewidth]{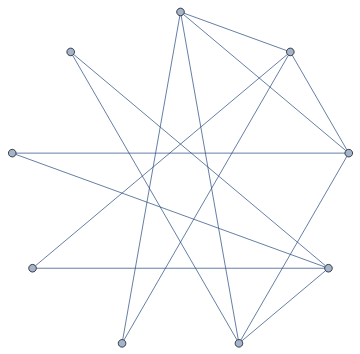}} &
    \subfloat[Order 9, 4-4-4-4-4-2-2-2-2]{\includegraphics[width=0.3\linewidth]{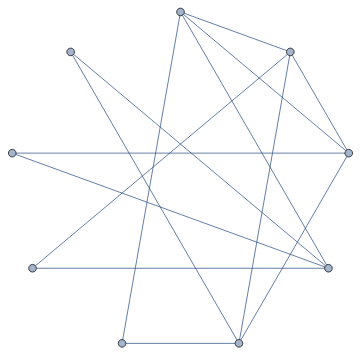}} \\
  \end{tabular}
  \caption{List of Harris Graphs Through Order 9 (1 of 3)} 
  \label{tab:pictures1}
\end{table}
\begin{table}[htbp] 
  \centering
  \captionsetup[subfloat]{labelformat=empty}
  \begin{tabular}{c c c} 
    \subfloat[Order 9, 6-4-4-4-4-2-2-2-2]{\includegraphics[width=0.3\linewidth]{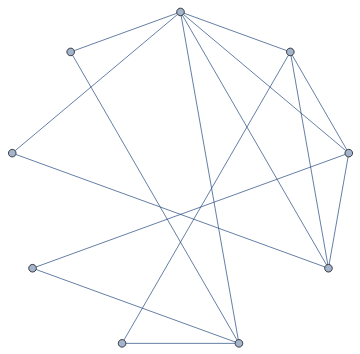}} &
    \subfloat[Order 9, 6-4-4-4-4-4-2-2-2]{\includegraphics[width=0.3\linewidth]{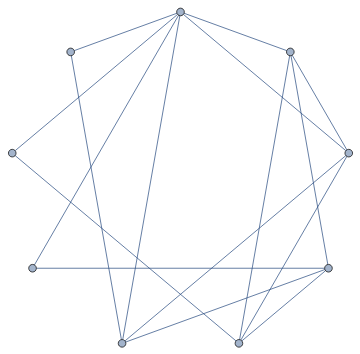}} & 
    \subfloat[Order 9, 6-6-4-4-4-4-2-2-2 ]{\includegraphics[width=0.3\linewidth]{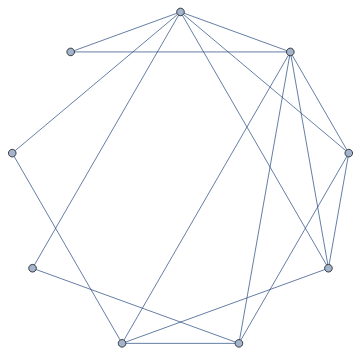}} \\
    \subfloat[Order 9, 4-4-4-4-4-2-2-2-2 ]{\includegraphics[width=0.3\linewidth]{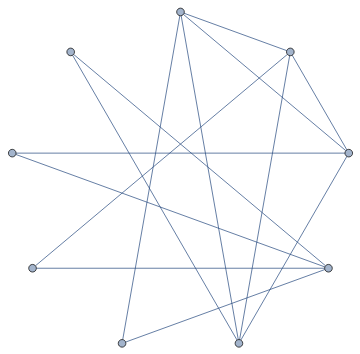}} &
    \subfloat[Order 9, 6-4-4-4-2-2-2-2-2 ]{\includegraphics[width=0.3\linewidth]{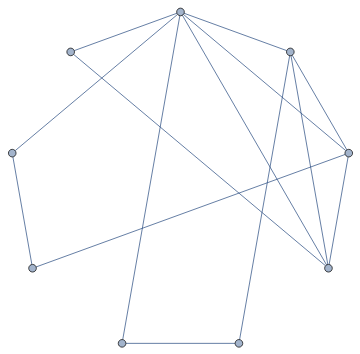}} &
    \subfloat[Order 9, 6-4-4-4-4-2-2-2-2 ]{\includegraphics[width=0.3\linewidth]{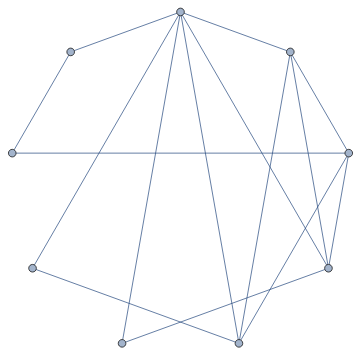}} \\
    \subfloat[Order 9, 6-4-4-4-4-2-2-2-2 ]{\includegraphics[width=0.3\linewidth]{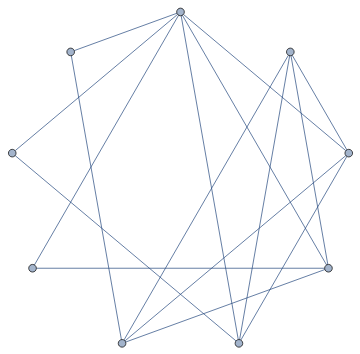}} &
    \subfloat[Order 9, 6-6-6-4-4-4-2-2-2 ]{\includegraphics[width=0.3\linewidth]{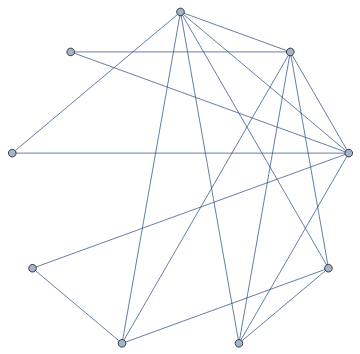}} &
    \subfloat[Order 9, 4-4-4-4-4-4-2-2-2 ]{\includegraphics[width=0.3\linewidth]{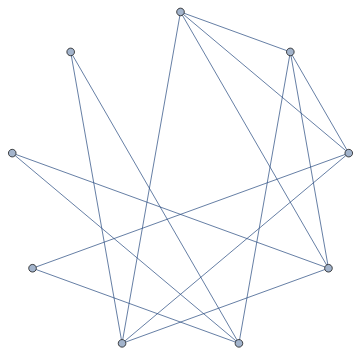}} \\
    \subfloat[Order 9, 6-4-4-4-4-4-2-2-2 ]{\includegraphics[width=0.3\linewidth]{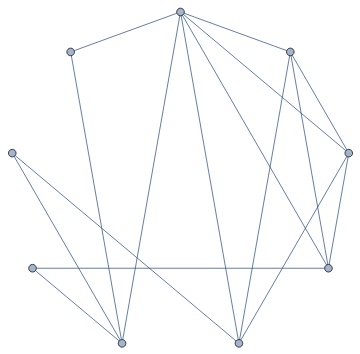}} &
    \subfloat[Order 9, 6-4-4-4-4-4-2-2-2 ]{\includegraphics[width=0.3\linewidth]{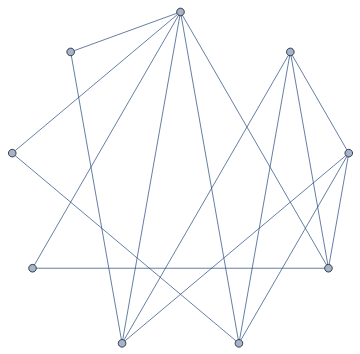}} &
    \subfloat[Order 9, 6-4-4-4-4-4-2-2-2 ]{\includegraphics[width=0.3\linewidth]{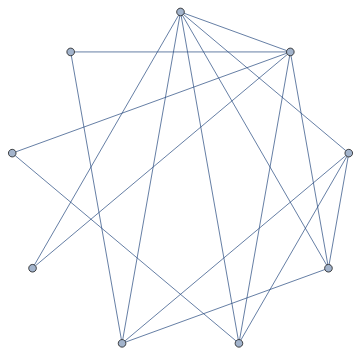}} \\
  \end{tabular}
  \caption{List of Harris Graphs Through Order 9 (2 of 3)} 
  \label{tab:pictures2}
\end{table}
\begin{table}[htbp] 
  \centering
  \captionsetup[subfloat]{labelformat=empty}
  \begin{tabular}{c c c} 
    \subfloat[Order 9, 6-4-4-4-4-4-2-2-2 ]{\includegraphics[width=0.3\linewidth]{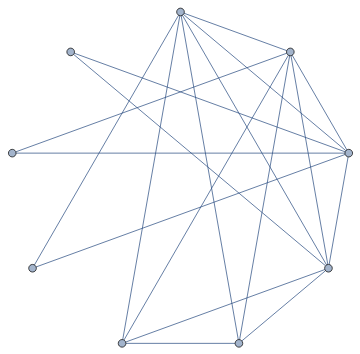}} &
    \subfloat[Order 9, 6-4-4-4-4-4-2-2-2 ]{\includegraphics[width=0.3\linewidth]{HarrisGraphs/Order-9-3/6-6-6-6-4-4-2-2-2.png}} &
    \subfloat[Order 9, 4-4-4-4-4-2-2-2-2 ]{\includegraphics[width=0.3\linewidth]{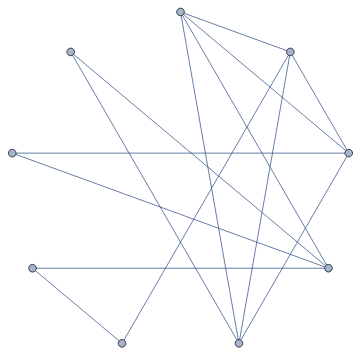}} \\
    \subfloat[Order 9, 4-4-4-4-4-4-2-2-2 ]{\includegraphics[width=0.3\linewidth]{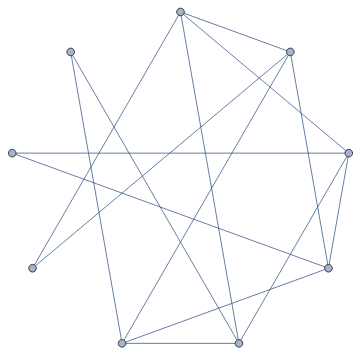}} &
    \subfloat[Order 9, 4-4-4-4-4-4-2-2-2 ]{\includegraphics[width=0.3\linewidth]{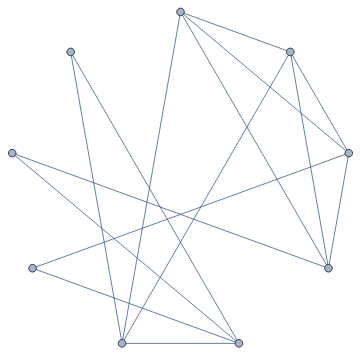}} &
    \subfloat[Order 9, 6-4-4-4-4-4-2-2-2 ]{\includegraphics[width=0.3\linewidth]{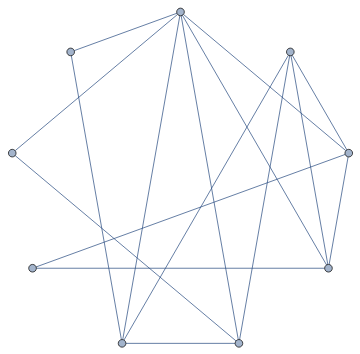}} \\
    \subfloat[Order 9, 6-4-4-4-4-4-4-2-2]{\includegraphics[width=0.3\linewidth]{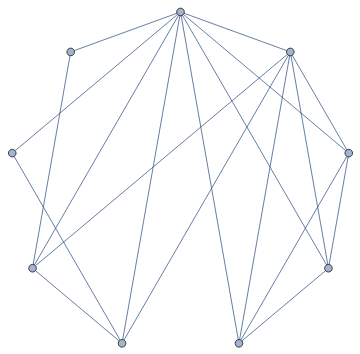}}
  \end{tabular}
  \caption{List of Harris Graphs Through Order 9 (3 of 3)} 
  \label{tab:pictures3}
\end{table}

\pagebreak
\subsection{Appendix 2}
Akshay Anand created a Harris graph checker that lets you enter graphs with a point-and-click interface. It checks to see if they are tough, if they are Eulerian, and if they are non-Hamiltonian. It can be accessed via \href{https://replit.com/@AkshayAnand4/HarrisGraphChecker?v=1}{this} link.
\begin{acknowledgment}{Acknowledgment.}
The authors would like to acknowledge contributions from Akshay Anand, Isaac Cheng, Justine Dugger-Ades, Sean A. Irvine, Lorenzo Lopez, Scott Neville, Liam Salib, Marco Troper, and Hirotaka Yoneda. We thank them for their observations, coding, and ideas. 
\end{acknowledgment}

\bibliography{refs}
\vfill\eject
\begin{biog}
\item[Francesca Gandini] received a Ph.D. in Mathematics in 2019 from the University of Michigan in Ann Arbor. She is an algebraist by training, but loves questions that can be studied with a computational or combinatorial approach. She believes that everyone should experience the joy of doing mathematics.
\begin{affil}
Department of Mathematics, Statistics, and Computer Science, St. Olaf College, Northfield MN 55057\\
fra.gandi.phd@gmail.com
\end{affil}

\item[Shubhra Mishra] entered Stanford University in 2021 to pursue a degree in Mathematics, and is currently studying Computer Science alongside that. Her interests include graph theory and teaching Large Language Models how to reason and do math.
\begin{affil}
Department of Computer Science, Stanford University, Stanford CA 94305\\
shubhra@stanford.edu
\end{affil}

\item[Douglas Shaw] received a Ph.D. in Mathematics in 1995 from the University of Michigan in Ann Arbor. He likes to teach Combinatorics, Calculus, General Education Mathematics, and Improvisational Theater. He believes high school students should be exposed to unsolved problems in mathematics.
\begin{affil}
Department of Mathematics, University of Northern Iowa, Cedar Falls IA 50614\\
doug.shaw@uni.edu
\end{affil}

\end{biog}
\end{document}